\documentclass{article}

\usepackage{amscd,amsxtra,amsthm,amsmath}
\usepackage[all]{xy}
\usepackage{etex}
\usepackage{pictex}
\usepackage{graphicx}
\usepackage{mathtools}
\usepackage{float}
\usepackage{xcolor}
\usepackage[utf8]{inputenc}
\usepackage[T1]{fontenc}

\usepackage{amsfonts}

\usepackage{multirow}
\newcommand{\ceil}[1]{\left \lceil #1 \right \rceil}
\newcommand{\floor}[1]{\left \lfloor #1 \right \rfloor}
\usepackage{comment}

\newtheorem{theorem}{Theorem}

\theoremstyle{definition}
\newtheorem{definition}[theorem]{Definition}

\newtheorem{remark}[theorem]{Remark}

\theoremstyle{corollary}

\theoremstyle{conjecture}

\theoremstyle{proposition}

\newcommand\blfootnote[1]{%
  \begingroup
  \renewcommand\thefootnote{}\footnote{#1}%
  \addtocounter{footnote}{-1}%
  \endgroup
}

\newcommand{\Mod}[1]{\ (\mathrm{mod}\ #1)}


\begin{document}
\parskip0pt
\parindent15pt
\baselineskip15pt    

\begin{center}
{\bf Rainbow Numbers for the Generalized Schur Equation $x_1 + x_2 +  \ldots + x_{m-1} = x_m$}
\vskip 20pt
{\bf Mark Budden}\\
 Department of Mathematics and Computer Science, Western Carolina University, Cullowhee, North Carolina\\
{\tt mrbudden@email.wcu.edu}\\
\vskip 10pt
{\bf Bruce Landman}\\
 Department of Mathematics, University of Georgia, Athens, Georgia\\
{\tt Bruce.Landman@uga.edu}\\
\end{center}

\vskip 30pt


\blfootnote{AMS Subject Classification (2020): 05D10, 11B75.}
\blfootnote{Keywords:  Schur numbers, rainbow Ramsey theory.}

\begin{abstract}
\noindent We consider the rainbow Schur number $RS_m(n)$, defined to be the minimum number of colors such that every coloring of $\{1,2,\ldots,n\}$, using all $RS_m(n)$ colors, contains a rainbow solution to the equation $x_1+x_2+\cdots +x_{m-1}=x_m$. Recently, the exact values of $RS_3(n)$ and $RS_4(n)$ were determined for all $n$. In this paper, we expand upon this work by providing a formula for $RS_m(n)$ that holds for all $m \geq 4$ and all $n$. A weakened version of the rainbow Schur number is also considered, for which one seeks solutions to the above-mentioned linear equation where, for a fixed $t \leq m$,  at least $t$ colors are used.
 \end{abstract}

\section{Introduction}

Many classical problems in Ramsey theory involve  determining the existence, or non-existence, of certain monochromatic structures under finite colorings of a set.  The subject known as rainbow Ramsey theory deals with what might be considered the opposite notion; namely, instead of looking for a structure such that all of its members have the same color, we look for a structure where no two elements have the same color.

An {\em r-coloring} of a set $S$ is a map from $S$ to $\{1,2,\ldots,r\}$. One of the earliest results in Ramsey theory is due to Schur \cite{Schur}, which states that for any positive integer $r$, there exists a least positive integer $S(r)$ such that every $r$-coloring of  $\{1,2,\ldots ,S(r)\}$ contains a monochromatic solution to the equation $x_1 + x_2 = x_3$. More generally,  for each $m \geq 3$, one can consider the Schur-type numbers for the equation
$$E_m:   x_1 + x_2 + \cdots  + x_{m-1} = x_m; $$
the existence of these numbers  follows easily from the work of Rado \cite{R}. In this paper, we consider solutions to $E_m$ from the perspective of rainbow Ramsey theory.

If $\chi$ is an $r$-coloring of a set $S$ that uses all $r$ colors (i.e, it is surjective), we say that $\chi$ is an {\em exact} $r$-coloring.  For $m\ge 3$ and $n\ge \frac{m(m-1)}{2}$, define the {\it rainbow Schur number} $RS_m(n)$ to be the minimum number of colors such that every exact $RS_m(n)$-coloring of $[1,n]$ contains a rainbow solution to $E_m$.  The assumption that $n\ge \frac{m(m-1)}{2}$ guarantees that $E_m$ is solvable using distinct elements from $[1,n]$. Since coloring $[1,n]$ using $n$ colors will always produce a
rainbow solution to $E_m$, we know that $RS_m(n)\le n$.  Note that rather than fix the number of colors and seek an optimal value of $n$, as in the definition of Schur numbers, rainbow Schur numbers fall under the subject of anti-Ramsey theory (introduced by Erd\H{o}s, Simonovits, and S\'os \cite{ESS}) by fixing $n$ and seeking an optimal number of colors.

In \cite{B}, it was proved that \begin{equation}RS_3(n)=\floor{\log _2(n)}+2,\quad \mbox{for all} \ n\ge 3.\notag
\end{equation} This result was proved independently by Fallon et al. \cite{FGRWW}, where it was also shown that \begin{equation} RS_4(n)=\ceil{\frac{n+6}{2}}, \quad \mbox{for all} \ n\ge 6.\label{Fallon4}\end{equation} In addition,  \cite{FGRWW}  provides a general lower bound for $R_{m}(n)$ by exhibiting a particular coloring of $[1,n]$ that avoids rainbow solutions to $E_m$.    In Section \ref{main},  we we determine the exact value of $RS_m(n)$  for all $m \geq 4$ and $n \geq \frac{m(m-1)}{2}$, which has Equation (\ref{Fallon4}) as a special case. In Section \ref{weak}, we provide  a formula for a generalization of rainbow Schur numbers, which we call weakened rainbow Schur numbers, where, for a given $t \le m$, we seek solutions to $E_m$ that use at least $t$ colors.

Results involving  rainbow Ramsey numbers for 3-term and 4-term arithmetic progressions may be found in \cite{JNR}. In \cite{DLMOR}, the authors provide a rainbow version of Rado's work on systems of linear homogeneous equations. Work on rainbow solutions to 3-variable linear equations in the group $\mathbb{Z}_{n}$ appears in  \cite{AEHNWY}, \cite{BKKTTY}, \cite{H}, \cite{HM}, and \cite{LM}.
Related work arises in \cite{Con}, \cite{FJR}, and \cite{FMR}, where additional restrictions are placed on the number of times each color occurs.
The work in \cite{FMMRWZ} deals with rainbow solutions to $E_3$ in the rectangular grid $[1,m] \times [1,n]$, with coordinate-wise addition.
As far as we know, weakened (as defined above) versions of the results contained in these related papers have not yet been studied.

\section{The Value of $RS_m(n)$} \label{main}

In this section, we give the exact value of $RS_m(n)$ for all $m \geq 4$ and $n \geq \frac{m(m-1)}{2}$. The next theorem shows that this value serves as a lower bound for $RS_m(n)$. In  \cite{FGRWW}, the authors give a proof of this lower bound, but for the sake of completeness we include a proof here, which is similar to, but slightly different from their proof.

\begin{theorem}\label{rainbowlower}
Let $m\ge 4$ and let $n \geq \frac{m(m-1)}{2}$.  Then
\[RS_m(n)\ge \left \lceil \frac{(m-3)n+\frac{m(m-1)}{2}}{m-2}\right \rceil.\]
\end{theorem}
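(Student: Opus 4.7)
\medskip\noindent\textbf{Proof plan.}
My plan is to exhibit an explicit exact $k$-coloring of $[1,n]$ that has no rainbow solution to $E_m$, where
\[
k := \left\lceil \tfrac{(m-3)n+m(m-1)/2}{m-2}\right\rceil - 1.
\]
Such a coloring witnesses $RS_m(n)\ge k+1$, which is the desired inequality.

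The coloring I have in mind is the most frugal one that could work: collapse a single initial segment $[1,t]$ into one color class and give every remaining element its own color. Explicitly, I would set
\[
t := \left\lceil \tfrac{n - (m-2)(m-1)/2}{m-2} \right\rceil
\]
and define $\chi\colon[1,n]\to\{1,2,\ldots,n-t+1\}$ by $\chi(i)=1$ for $i\in[1,t]$ and $\chi(i)=i-t+1$ for $i\in[t+1,n]$. An algebraic bookkeeping step---essentially the identity
\[
\left\lceil \tfrac{(m-3)n+m(m-1)/2}{m-2}\right\rceil + \left\lceil \tfrac{n-(m-2)(m-1)/2}{m-2}\right\rceil = n+2,
\]
which follows from the numerators summing to $(m-2)(n+1)+1$ together with a short case analysis of residues modulo $m-2$---shows that $n-t+1$ is exactly $k$. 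The hypothesis $n\ge m(m-1)/2$ is just enough to guarantee $2\le t< n$, so the coloring is genuinely exact.

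The combinatorial heart of the proof is the claim that, with $t$ chosen as above, every solution to $E_m$ in $[1,n]$ contains at least two elements of $[1,t]$. Given this, the two such elements share color $1$, so no rainbow solution can exist. To prove the claim, I argue by contradiction: assume a solution $x_1+\cdots+x_{m-1}=x_m$ (WLOG $x_1<\cdots<x_{m-1}$) has at most one of its elements in $[1,t]$. The case $x_m\in[1,t]$ is immediate, since then $x_1,\ldots,x_{m-1}>t$ are distinct, forcing $x_m\ge (t+1)+\cdots+(t+m-1)>t$. Otherwise at most $x_1\le t$ and $x_i\ge t+i-1$ for $2\le i\le m-1$, so
\[
x_m \ge 1+(t+1)+\cdots+(t+m-2)=1+(m-2)t+\tfrac{(m-2)(m-1)}{2},
\]
and the defining inequality $(m-2)t\ge n-(m-2)(m-1)/2$ immediately forces $x_m\ge n+1$, contradicting $x_m\le n$.

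The main obstacle is not combinatorial---the extremal-sum argument above is a one-line computation once $t$ is in place---but algebraic: verifying the ceiling identity that links $t$ to the target value $k$, and checking the boundary conditions $2\le t\le n-1$ over the full admissible range $n\ge m(m-1)/2$. These reduce to routine (if slightly fiddly) manipulations with the division algorithm, after which the proof is short.
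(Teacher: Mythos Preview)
Your proposal is correct and follows essentially the same approach as the paper: color an initial block $[1,t]$ with one color and the rest with singleton colors, then show any putative rainbow solution would force $x_m>n$ via the extremal-sum estimate $x_m\ge 1+(t+1)+\cdots+(t+m-2)$. The only cosmetic difference is that the paper sets the block size directly as $t=n+2-\left\lceil\frac{(m-3)n+m(m-1)/2}{m-2}\right\rceil$, making the color count $n-t+1=\left\lceil\cdot\right\rceil-1$ immediate, whereas you define $t$ by a separate ceiling expression and must then verify the identity $\left\lceil\frac{(m-3)n+m(m-1)/2}{m-2}\right\rceil+\left\lceil\frac{n-(m-2)(m-1)/2}{m-2}\right\rceil=n+2$; your two formulas for $t$ in fact coincide.
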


\begin{proof}
Suppose that $a_1+a_2+\cdots +a_{m-1}=a_m$ is any solution to $E_m$ that lies within $[1,n]$, where $a_i < a_{i+1}$ for all $1\le i\le m-2$.  Let
$$k= n + 2 - \left\lceil \frac{(m-3)n+\frac{m(m-1)}{2}}{m-2}\right\rceil.$$
We claim that $a_2\le k$.  For a contradiction,  assume $a_{2} > k$. Then
\begin{align}
   \sum_{i=1}^{m-1} a_{i} &\geq   1+\sum_{i=1}^{m-2} (k+i)  \notag \\
                          &= 1+ (m-2)(n+2) - (m-2)\left\lceil \frac{(m-3)n+\frac{m(m-1)}{2}}{m-2}\right\rceil \notag \\
                          &\qquad +\frac{(m-1)(m-2)}{2}  \notag \\
                          & > 1+(m-2)(n+2)  - (m-2) \frac{(m-3)n+ \frac{m(m-1)}{2}}{m-2} - (m-2) \notag \\
                          &\qquad + \frac{(m-1)(m-2)}{2}  \notag \\
                          & = 1 + n + (m-2)\left(2 - 1 + \frac{m-1}{2}\right) - \frac{m(m-1)}{2} \notag \\
                          & = 1+n+m-2+\frac{m-1}{2}(-2) =  n, \notag
\end{align}
which contradicts the assumption that the solution lies in $[1,n]$.

To complete the proof, color $[1,n]$ as follows.  Color all members of $[1,k]$ the same color, say red, and color each element of $[k+1, n]$ its own unique color, different from red.  Note that this is an exact $(n-k+1)$-coloring, that is,
it uses
 $$ \left\lceil \frac{(m-3)n+\frac{m(m-1)}{2}}{m-2}\right\rceil  -1 $$ colors.  By the above claim, $a_1$ and $a_2$ are both colored red, and hence, the solution is not rainbow.  So, at least
 $$ \left\lceil \frac{(m-3)n+\frac{m(m-1)}{2}}{m-2}\right \rceil $$ colors are needed to guarantee a rainbow solution to $E_m$.
\end{proof}

Before presenting the next theorem, we give a definition.

\begin{definition}  Let $n \geq r$ be positive integers, and let $\chi$ be an exact $r$-coloring of $[1,n]$. An integer $x \in [1,n]$ is called a {\em surplus integer} of $\chi$ if there exists a $y < x$ such that $\chi(x) = \chi(y)$.
\end{definition}
\begin{remark}\label{surplus}  Let $s_{\chi}$ denote the number of surplus integers of $\chi$. From the definition, we see that, for any exact $r$-coloring $\chi$ of an interval $[1,n]$, we have $s_\chi = n - r$.
\end{remark}

\begin{theorem}\label{GeneralUpperBound}
Let $m, n \in \mathbb{Z^{+}}$ with $m \geq 4$ and $n \geq \frac{m(m-1)}{2}$.  Assume that one of the following conditions hold:
\begin{equation*}  m \mbox{ is odd and }  n \equiv 1\Mod{(m-2)}
\end{equation*}
or
\begin{equation*} m \mbox{ is even and } n \equiv \frac{m}{2} \Mod{(m-2)}.
\end{equation*}
Let
\[  c(n,m) = \frac{(m-3)n+m(m-1)/2}{m-2}.\]
Then every exact $c(n,m)$-coloring of $[1,n]$ contains a rainbow solution to $E_m$.
\end{theorem}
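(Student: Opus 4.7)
Under the stated congruence conditions, $c(n,m)$ is a positive integer. Indeed, modulo $m-2$ one has $m \equiv 2$ and $m-1 \equiv 1$, so $(m-3)n + m(m-1)/2 \equiv -n + m(m-1)/2$; a direct check shows $m(m-1)/2 \equiv 1 \pmod{m-2}$ when $m$ is odd and $m(m-1)/2 \equiv m/2 \pmod{m-2}$ when $m$ is even, so the hypothesis on $n$ makes the numerator divisible by $m-2$. Setting $s := n - c(n,m) = (n - m(m-1)/2)/(m-2) \in \mathbb{Z}_{\ge 0}$, Remark \ref{surplus} shows that any exact $c(n,m)$-coloring $\chi$ of $[1,n]$ has precisely $s$ surplus integers, so the plan is to show that an arbitrary such $\chi$ admits a rainbow solution to $E_m$.

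The core plan is to produce a rainbow solution from an explicit parametrized family of candidates. For each pair of integers $(a_1, a_2)$ satisfying $1 \le a_1 < a_2 \le s+2$ and $a_1 + a_2 \le s+3$, define
\[
T_{a_1,a_2} := \bigl(a_1,\ a_2,\ s+3,\ s+4,\ \ldots,\ s+m-1,\ n + a_1 + a_2 - s - 3\bigr).
\]
A direct calculation using $n = (m-2)s + m(m-1)/2$ confirms that $T_{a_1,a_2}$ is a valid solution to $E_m$ with all coordinates in $[1,n]$ and with strictly increasing first $m-1$ entries (recall from the proof of Theorem \ref{rainbowlower} that any solution must satisfy $a_2 \le s+2$, so this family is essentially tight). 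As $(a_1,a_2)$ varies, the first two entries sweep out $[1,s+2]$ and the last entry sweeps out $[n-s,\ n]$, while the middle block $\{s+3,\ldots,s+m-1\}$ remains fixed. First I would check whether some $T_{a_1,a_2}$ is rainbow; if so, we are done.

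If no $T_{a_1,a_2}$ is rainbow, every such candidate contains a same-color pair. The main obstacle is a color collision inside the middle block $\{s+3,\ldots,s+m-1\}$, because a single such collision simultaneously spoils every member of the family. To handle this, I would shift to alternative families in which the middle block is replaced by $\{b, b+1, \ldots, b+m-3\}$ for values $b \ne s+3$, with the allowed range of $(a_1,a_2)$ adjusted to keep $a_m \le n$ (the constraint becomes $a_1 + a_2 \le (m-2)(s+3) - (m-3)b$). The congruence hypothesis on $n$ is precisely what makes the arithmetic balance so that these shifted candidates still lie in $[1,n]$ with integrality. The final step is a pigeonhole argument: since only $s$ surplus integers exist, the collisions required to simultaneously defeat the original family and a handful of shifted families would force strictly more than $s$ surplus integers, contradicting Remark \ref{surplus} and yielding the desired rainbow solution.
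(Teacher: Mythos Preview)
Your family $T_{a_1,a_2}$ is set up correctly and the observation $a_2 \le s+2$ is exactly right, but the proof stops at the hard part. The sentence ``the final step is a pigeonhole argument'' is not a proof, and in fact the counting you would need does not go through as stated. A single collision inside the fixed middle block $\{s+3,\ldots,s+m-1\}$ kills the entire family while costing only one surplus integer; your remedy is to ``shift'' the block, but you neither specify which shifts to use nor show that a rainbow middle block must appear after boundedly many shifts. Even granting a rainbow middle block, you still have three moving coordinates $a_1$, $a_2$, $a_m$, and a collision can occur between any of these and the block, between $a_1$ and $a_2$, or between one of them and $a_m$; you have not explained how to attribute a \emph{distinct} surplus integer to each spoiled candidate, and since the candidates overlap heavily in all three moving coordinates this is far from automatic. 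Your remark that ``the congruence hypothesis on $n$ is precisely what makes the arithmetic balance so that these shifted candidates still lie in $[1,n]$ with integrality'' is also off: the shifted families are integral for any $b$, so the congruence is doing no work in your argument.

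The paper avoids all of this by induction on $n$ in steps of $m-2$ (which is where the congruence is actually used). The inductive hypothesis forces every element of $[n+1,n+m-2]$ to be a singleton color class, so the top coordinate $a_m = n+m-2$ can never collide. It then builds a guaranteed-rainbow prefix $1 = t_0 < t_1 < \cdots < t_{m-4}$ greedily, so that only \emph{two} coordinates remain free; these are organized into disjoint pairs $P_i = \{a_i,b_i\}$ with $a_i + b_i$ constant, and now each spoiled candidate genuinely contributes its own surplus integer inside $P_i$. The surplus bookkeeping then closes cleanly. The missing idea in your approach is exactly this reduction to a two-parameter family with disjoint ``uncertain'' pairs; without it, the pigeonhole step you allude to does not assemble into a contradiction.
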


\begin{proof} First note that the assumptions regarding $m$ and $n$ imply that $c(n,m)$ is a positive integer for all such $m$ and $n$. We begin with the case in which $m$ is odd. We use induction on $\ell\geq \frac{m+1}{2}$, where $n=(m-2)\ell+1 $.  If $\ell=\frac{m+1}{2}$, then we have
\[ n = (m-2)\frac{m+1}{2} + 1 = \frac{m(m-1)}{2} \]
and
$$c(n,m)  =   \frac{(m-3)\frac{m(m-1)}{2}+\frac{m(m-1)}{2}}{m-2}  =\frac{m(m-1)}{2}.$$
Hence, since $c(n,m) = n = \frac{m(m-1)}{2}$, there is only one exact $c(n,m)$-coloring of $[1,n]$, and it contains the rainbow equation $1+2+ \cdots + (m-1) = n$. Therefore, the result is true when   $\ell=\frac{m+1}{2}$.

We now let $\ell \geq \frac{m+1}{2}$ and assume the result holds for $n=(m-2)\ell+1$.
To complete the proof, we will show that the result holds for $(m-2)(\ell + 1) + 1 =n+m-2$, i.e., that every exact $c(n+m-2,m)$-coloring of $[1,n+m-2]$ has a rainbow solution to $E_m$. For a contradiction, assume $\chi$ is an exact $c(n+m-2,m)$-coloring of $[1,n+m-2]$ with no such rainbow solution.  By the inductive hypothesis, we may assume that no more than $c(n,m)-1 = c(n+m-2,m) - (m-2)$ colors are used in $[1,n]$.
This implies that each member of $[n+1,n+m-2]$ is the only member of its color class under $\chi$.

For each $j$ such that $0 \leq j \leq m-4$, we define $t_{j}$ recursively as follows. Let $t_{0} = 1$.  For each $j=1,2,\ldots, m-4$, define $t_{j}$ be the least integer  greater than $t_{j-1}$ such that
 $\chi(t_j) \not\in \{\chi(t_i): 0 \leq i \leq j-1\}$. From this definition, we see that the number of surplus integers of $\chi$ that lie within $[1,t_{m-4}]$ is $t_{m-4} - (m-3)$, so that (using the notation of Remark \ref{surplus})
 \begin{equation}\label{Equation 2} t_{m-4} \leq s_{\chi} + m - 3.
 \end{equation}
Note that, by Remark \ref{surplus} and the definition of $\chi$,
\begin{align}
 s_\chi  &=  n+m-2 - c(n+m-2,m) \notag \\ &= n+m-2 - \frac{(m-3)(n+m-2) + \frac{m(m-1)}{2}}{m-2}.
\label{surplusEm}\end{align}
By  Equations (\ref{Equation 2}) and (\ref{surplusEm}), we have
 \begin{align}
 t_{m-4} & \leq s_{\chi} + m - 3 \notag \\
  & = n+2m-5 - \frac{(m-3)(n+m-2) + \frac{m(m-1)}{2}}{m-2}  \notag \\
  & =  \frac{n}{m-2} + 2m-5 - \frac{((m-3)(m-2) + \frac{m(m-1)}{2}}{m-2} \notag \\
  & = \frac{n}{m-2} + m-2 - \frac{\frac{m(m-1)}{2}}{m-2}. \notag
 \end{align}
Since the $t_j$ are strictly increasing, it follows that
 \[ t_{m-5} \leq \frac{n}{m-2} + m-3 - \frac{\frac{m(m-1)}{2}}{m-2}\]
 and, more generally,
 \begin{equation}\label{tjBound}
  t_j \leq \frac{n}{m-2} + j+2 - \frac{\frac{m(m-1)}{2}}{m-2},
 \end{equation}
  for $1 \leq j \leq m-4$.

 Let
 \[v = \left\lfloor \frac{n-3t_{m-4}- \sum_{j=1}^{m-5}t_{j} + m -4}{2}\right\rfloor. \]
 Note that $v \geq 1$ since, using Inequality (\ref{tjBound}),
 \begin{align} n-3t_{m-4}- \sum_{j=1}^{m-5}t_{j} + m -4 & \geq   n - \frac{3n}{m-2} - 3\left( m-2 - \frac{\frac{m(m-1)}{2}}{m-2}\right)+m-4 \notag \\
 &\quad   - (m-5)\frac{n}{m-2}  - \sum_{j=1}^{m-5}\left(j+2 -  \frac{\frac{m(m-1)}{2}}{m-2}\right) \notag \\
 & = -4m+12 + \frac{3 \frac{m(m-1)}{2}}{m-2} - \frac{(m-5)(m-4)}{2} \notag \\
 &\quad + \frac{(m-5)\frac{m(m-1)}{2}}{m-2} \notag\\
 & = -4m+12 + \frac{m(m-1)}{2} - \frac{(m-5)(m-4)}{2} \notag \\
 & =  2.\notag
 \end{align}
 For each $i$ such that  $1 \leq i \leq v$, let \[a_i = t_{m-4}+i\] and \[b_i = n+m-3-2t_{m-4}-\sum_{j=1}^{m-5}t_{j}  - i,\] and let $P_i =  \{a_i,b_i\}$.
 Note that the $a_i$'s are strictly increasing,  the $b_{i}$'s are strictly decreasing, and that $\max\{a_i\}$ and $\min\{b_i\}$
both occur when $i = v$.  Also, we have that $a_v < b_v$, since
  \begin{align}
  b_v - a_v   n-2t_{m-4}-\sum_{j=1}^{m-5}t_{j} +m&-3-v - (t_{m-4}+v)  \notag \\
  & = n-3t_{m-4}-\sum_{j=1}^{m-5}t_{j}+m-3 -2v \notag \\
  & > 0.\notag
  \end{align}
From these facts, we see that the sets $P_i$ are pairwise disjoint and that $|P_i| = 2$ for each $i$.

If there is  some pair $P_u = \{a_u,b_u\}$ such that $\chi(a_u) \neq \chi(b_u)$ and
\[ \{\chi(a_u), \chi(b_u)\} \cap \{\chi(t_i): 0 \leq i \leq m-4\} = \emptyset , \]
then  $E_m$ has the rainbow solution
\[ 1+t_1 + t_2 + \cdots t_{m-4} +(t_{m-4} + u) + \left(n+m-3-2t_{m-4}- \sum_{j=1}^{m-5}t_{j}- u\right) = n+m-2 \]
since, as noted previously,  $n+m-2$ is the only member of its color class.
 Hence, by our assumption about $\chi$,  no such $P_u$ exists.  Thus, each $P_i$ contributes at least one surplus integer to $s_\chi$.
This implies that
\begin{equation}\label{SurplusBoundwithv}
s_{\chi} \geq t_{m-4}-(m-3)+v,
\end{equation}
because  there are exactly $t_{m}- (m-3)$ surplus integers contained in $[1,t_{m-4}]$.

By Inequality (\ref{tjBound}), we have
\begin{align}
 \sum_{j=1}^{m-4}t_{j} & \leq (m-4)\left( \frac{n-\frac{m(m-1)}{2}}{m-2}+2\right) + \frac{(m-4)(m-3)}{2}\notag \\
                       & =   (m-4) \frac{2n - m(m-1)+4(m-2) + (m-3)(m-2)}{2(m-2)}\notag \\
                       & =   (m-4)\frac{2n-2}{2(m-2)} \notag \\
                       & =    \frac{(m-4)(n-1)}{m-2}.\label{tjSum}
 \end{align}
Using Inequalities (\ref{SurplusBoundwithv}) and  (\ref{tjSum}), and the definition of $v$, we have
\begin{align}
 s_\chi & \geq   t_{m-4}-(m-3) + v   \notag\\
  & \geq t_{m-4} - (m-3) +  \frac{n-3t_{m-4}- \sum_{j=1}^{m-5}t_{j} + m -5}{2} \notag\\
  & = \frac{n}{2}-\frac{\sum_{j=1}^{m-4}t_{j}}{2}  - \frac{m-1}{2} \notag\\
  & \geq \frac{n}{2} - \frac{(m-4)(n-1)}{2(m-2)} - \frac{m-1}{2} \notag\\
  & = n\left(\frac{1}{2}-\frac{m-4}{2(m-2)}\right) + \frac{m-4}{2(m-2)} - \frac{m-1}{2} \notag\\
  & = \frac{n}{m-2} - \frac{m^{2}-4m+6}{2(m-2)}.\label{LowerBoundonschi}
\end{align}
Now, from Equation (\ref{surplusEm}) we have
\begin{align}
s_{\chi} & = n+m-2 - \frac{(m-3)(n+m-2) + \frac{m(m-1)}{2}}{m-2} \notag\\
& = \frac{n}{m-2} -\frac{2(m-2)^{2}-3m^{2}+11m-11}{2(m-2)}\notag\\
& = \frac{n}{m-2} - \frac{m^{2}-3m+4}{2(m-2)},\notag
\end{align}
which is in contradiction to  Inequality (\ref{LowerBoundonschi}).  This completes the proof for the case in which $m$ is odd.

The proof for $m$ even is almost identical to the proof for $m$ odd. The only difference is that the induction is done on $\ell$ where $n = (m-2)\ell + \frac{m}{2}$,  and the initial value of $\ell$ is taken to be $\frac{m}{2}$. Then, just as in the odd case, this initial value of $\ell$ again gives
     \[ n = \frac{m(m-1)}{2} = c(n,m). \]
Hence, as explained in the case of $m$ odd,  for the initial step of the induction for $m$ even, we have that the result holds for $\ell = \frac{m}{2}$.  For the inductive step, we assume that the result holds for $n = \ell (m-2) + \frac{m}{2}$ for some $\ell \geq  \frac{m}{2}$, and must then show that it holds for $(\ell+1)(m-2) + \frac{m}{2}$. The rest of the proof is the same as that for the odd case.
\end{proof}

We are now able to give the exact value of $R_m(n)$ when $m \geq 4$.

\begin{theorem} \label{mOddAlln} Let $m \geq 4$  and $n \geq \frac{m(m-1)}{2}$.  Then
\begin{equation}\label{Formula}
RS_{m}(n) =  \left\lceil \frac{(m-3)n+\frac{m(m-1)}{2}}{m-2} \right\rceil.
\end{equation}
\end{theorem}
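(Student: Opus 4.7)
The lower bound $RS_m(n) \ge \lceil c(n,m) \rceil$ is delivered by Theorem~\ref{rainbowlower}, so the plan is to establish the matching upper bound by reducing the general $n$ to the two congruence classes already handled by Theorem~\ref{GeneralUpperBound}.

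I would first record a standard monotonicity observation: if every exact $k$-coloring of $[1,N]$ forces a rainbow solution to $E_m$, then so does every exact $k'$-coloring with $k \le k' \le N$. The one-line proof is to merge $k'-k$ pairs of color classes into an exact $k$-coloring, extract a rainbow solution there, and note that passing to the finer original coloring preserves distinctness of the color values on that solution.

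Next I would pick $n_1$ to be the largest integer in $[m(m-1)/2,\,n]$ satisfying the congruence condition of Theorem~\ref{GeneralUpperBound} for the relevant parity of $m$. Since the smallest such value is exactly $m(m-1)/2$ (as verified in the proof of Theorem~\ref{GeneralUpperBound}) and consecutive nice values differ by $m-2$, this yields $n = n_1 + r$ with $r \in \{0,1,\dots,m-3\}$ and $n_1 \ge m(m-1)/2$. A brief calculation from $c(n,m) - c(n_1,m) = (m-3)r/(m-2) = r - r/(m-2) \in (r-1,\,r)$ for $1 \le r \le m-3$ then gives the key identity $\lceil c(n,m)\rceil = c(n_1,m) + r$.

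Finally, given any exact $R = \lceil c(n,m)\rceil$-coloring $\chi$ of $[1,n]$, its restriction to $[1,n_1]$ must use at least $R - r = c(n_1,m)$ colors, since the $r$-element tail $[n_1+1,\,n]$ can contribute at most $r$ colors on its own. Combining Theorem~\ref{GeneralUpperBound} applied at $n_1$ with the monotonicity observation produces a rainbow solution to $E_m$ inside $[1,n_1] \subseteq [1,n]$, finishing the proof. I do not expect a genuine obstacle here, as Theorem~\ref{GeneralUpperBound} carries the substantive content; the pieces worth double-checking are the ceiling identity and the verification that the smallest nice value really equals $m(m-1)/2$, so that $n_1$ is well defined for every admissible $n$.
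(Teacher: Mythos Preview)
Your proposal is correct and follows essentially the same route as the paper: both use Theorem~\ref{rainbowlower} for the lower bound and then, for the upper bound, restrict an exact $\lceil c(n,m)\rceil$-coloring of $[1,n]$ to the subinterval $[1,n_1]$ where $n_1$ lies in the congruence class covered by Theorem~\ref{GeneralUpperBound}, counting that at least $c(n_1,m)$ colors survive. Your version is in fact slightly cleaner---you make the monotonicity-in-colors step explicit (the paper uses it tacitly), and by noting that the smallest ``nice'' value is exactly $m(m-1)/2$ you avoid the paper's separate treatment of the range $m(m-1)/2 \le n \le m(m-1)/2 + m - 4$.
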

 \begin{proof}  Let $m \geq 4$ and $n \geq \frac{m(m-1)}{2}$.
 Let $a(n,m)$ denote the right-hand side of Equation (\ref{Formula}). By Theorem \ref{rainbowlower}, we know that $a(n,m)$ is  a lower bound for $RS_{m}(n)$.
We claim that $a(n,m)$ is also an upper bound.  We begin with the case in which
\[ n = \frac{m(m-1)}{2} + i, \]
 such that $0 \leq i \leq m-4$.  In this case, we have
\begin{align}
a(n,m)& = \left\lceil \frac{(m-2)\frac{m(m-1)}{2}+i(m-3)}{m-2} \right\rceil \notag \\
& = \frac{m(m-1)}{2} + \left\lceil i - \frac{1}{m-2} \right\rceil \notag \\
& = n.\notag
\end{align}
Since the only exact $n$-coloring of $[1,n]$ has no two elements with the same color, we have that $1+2+ \cdots + (m-1) = \frac{m(m-1)}{2}$ is a rainbow solution to $E_m$. This shows that, in this case, $a(n,m)$ is an upper bound on $RS_m(n)$.
Thus, we will assume that \[ n \geq \frac{m(m-1)}{2} + m - 3 .\]

Consider the case in which $m$ is odd.
By Theorem \ref{GeneralUpperBound}, we know that the claim is true whenever $n \equiv 1 \Mod{(m-2)},$ so we may assume
   that $n \equiv  i \Mod{(m-2)}$ where  $2 \leq i \leq m-2$.
  Since $m$ is odd,   $\frac{m(m-1)}{2} \equiv 1 \Mod{(m-2)}$, and therefore
\begin{equation}\label{aNumerator}
 (m-3)n+ \frac{m(m-1)}{2} \equiv -i+1 \Mod{(m-2)}.
 \end{equation}
Let $\chi$ be any exact $a(n,m)$-coloring of $[1,n]$.
  Since $1 \leq i - 1 \leq m-3$, from Equation (\ref{aNumerator}), we have
 \begin{equation} \label{aEquation}
  a(n,m) =    \frac{(m-3)n+\frac{m(m-1)}{2}+i-1}{m-2}.
  \end{equation}
 From Equation (\ref{aEquation}), within the interval $[1,n-i+1]$, there must be at least
\begin{align}
a(n,m)- i + 1 & = \frac{(m-3)n + \frac{m(m-1)}{2}+i-1 + (m-2)(-i+1)}{m-2}\notag \\
     & = \frac{(m-3)(n-i+1)+ \frac{m(m-1)}{2}}{m-2} \label{ColorsShorterIntervalOdd}
\end{align}
different colors.

By Theorem \ref{GeneralUpperBound}, since $n-i+1 \equiv 1 \Mod{(m-2)}$  and $n-i+1 \geq \frac{m(m-1)}{2}$, it follows that
  \[RS_{m}(n-i+1) \leq \frac{(m-3)(n-i+1)+ \frac{m(m-1)}{2}}{m-2}.\]
  Hence, from Equation (\ref{ColorsShorterIntervalOdd}), under $\chi$ there is a rainbow solution to $E_m$ within $[1,a(n,m)-i+1]$, and therefore within $[1,a(n,m)]$, which completes the proof for odd values of $m$.

  Now assume that $m$ is even. Similar to the proof of the odd case,
   we may assume that $n \equiv  i \Mod{(m-2)}$ where  $\frac{m}{2}+1 \leq i \leq \frac{m}{2}+m-3$.
   Since $m$ is even, we have $\frac{m(m-1)}{2} \equiv \frac{m}{2}\Mod{(m-2)}$, so that
\begin{equation*}\label{aNumeratorEven}
 (m-3)n+ \frac{m(m-1)}{2} \equiv -i+\frac{m}{2} \Mod{(m-2)}.
 \end{equation*}
From this and the fact that  $1 \leq i - \frac{m}{2} \leq m-3$, it follows that
 \begin{equation} 
  a(n,m) =    \frac{(m-3)n+\frac{m(m-1)}{2}+i-\frac{m}{2}}{m-2}.\notag
  \end{equation}
  Like in the odd case, if $\chi$ is any exact $a(n,m)$-coloring of $[1,n]$,
   then within the interval $[1,n-i+\frac{m}{2}]$ there must be at least
 \begin{equation}\label{ColorsShorterInterval}
 a(n,m)- i + \frac{m}{2} = \frac{(m-3)(n-i+\frac{m}{2})+ \frac{m(m-1)}{2}}{m-2}
 \end{equation}
 different colors. By Theorem \ref{GeneralUpperBound}, since $n-i+\frac{m}{2} \equiv \frac{m}{2} \Mod{(m-2)}$, it follows that
  \[RS_{m}(n-i+\frac{m}{2}) \leq \frac{(m-3)(n-i+\frac{m}{2})+ \frac{m(m-1)}{2}}{m-2}.\]
  Hence, from Equation (\ref{ColorsShorterInterval}), under $\chi$ there is a rainbow solution to $E_m$ within $[1,a(n,m)-i+\frac{m}{2}]$, and therefore within $[1,a(n,m)]$, which completes the proof.
  \end{proof}

\section{Weakened Rainbow Schur Numbers}\label{weak}

For $m\ge 3$ and $2\le t\le m$, define the {\it weakened rainbow Schur number} $RS_{t,m}(n)$ to be the minimum number of colors such that every exact $RS_{t,m}(n)$-coloring of $[1,n]$ contains a solution to $E_m$ that uses at least $t$ of the colors.  Here, $RS_{m,m}(n)$ agrees with the rainbow Schur number $RS_m(n)$. Note that if $t < m$ then, in contrast to the situation with rainbow colorings of $E_m$, the relevant solutions to $E_m$ do not necessarily consist of distinct summands.

When $t_1 \le t_2$, observe that every solution to $E_m$ that uses at least $t_2$ colors necessarily uses at least $t_1$ colors.  It follows that $$RS_{t_1,m}(n)\le RS_{t_2,m}(n),$$ for all $m\ge 3$ and $n$ for which a solution to $E_m$ exists that can use at least $t_2$ colors.  From this observation, we see that $RS_{t,m}(n)$ is defined for all $t$ such that $2\le t<m$ whenever $RS_m(n)$ is defined.

While a range of values for $n$ was not specified in the definition of $R_{t,m}$, it is natural to only consider values of $n$ for which there can exist a solution to $E_m$ that uses at least $t$ colors. For any $m\ge 3$ and $2\le t\le m$, the equation
$$\underbrace{1+1+\cdots +1}_{m-t+1\ terms}+2+3+\cdots +(t-1)=\frac{t(t-1)}{2}+m-t$$ has the least sum among all equations in $E_m$ that can be colored using at least $t$ colors.  For this reason, we assume $n\ge \frac{t(t-1)}{2}+m-t$ when considering $RS_{t,m}(n)$.  In the evaluations of $RS_{t,m}(n)$ that follow, we often restrict the values of $n$ beyond this natural bound.

\begin{theorem}
For all $m\ge 3$ and $n\ge 2m-4$, we have $RS_{2,m}(n)= 2$.
\end{theorem}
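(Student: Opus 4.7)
The lower bound $RS_{2,m}(n)\ge 2$ is immediate, since any $1$-coloring of $[1,n]$ produces only monochromatic solutions to $E_m$. The substance of the theorem is therefore the upper bound: every exact $2$-coloring $\chi$ of $[1,n]$ must admit some solution to $E_m$ that uses both colors. My plan is to exhibit such a solution explicitly from a single family of candidates, splitting into cases according to how $\chi$ behaves on the small integers.

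The candidate family is
\[ \underbrace{1+1+\cdots+1}_{m-2}+a=m-2+a, \]
together with its degenerate case $\underbrace{1+\cdots+1}_{m-1}=m-1$. The hypothesis $n\ge 2m-4$ is exactly what makes the largest summand $m-2+a$ fit inside $[1,n]$ for every $a\le m-2$.

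First, if $\chi(1)\ne\chi(m-1)$, then the all-ones solution $1+1+\cdots+1=m-1$ is already bichromatic and lies in $[1,n]$, since $m-1\le 2m-4\le n$ for $m\ge 3$. Otherwise $\chi(1)=\chi(m-1)$, and I would split further. If there exists some $k\in[2,m-2]$ with $\chi(k)\ne\chi(1)$ (a case that is only non-vacuous for $m\ge 4$), then choosing $a=k$ gives the solution with $x_m=m-2+k\le 2m-4\le n$, and the summand $k$ itself introduces the second color. If no such $k$ exists, then $\chi$ is constant on $[1,m-1]$, so since $\chi$ is exact there must be some $k\in[m,n]$ with $\chi(k)\ne\chi(1)$; taking $a=k-m+2\in[2,n-m+2]$ gives the valid solution $\underbrace{1+\cdots+1}_{m-2}+(k-m+2)=k$, which is bichromatic because $\chi(k)\ne\chi(1)$ and the ones are colored $\chi(1)$.

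The main obstacle is essentially just case bookkeeping; the only quantitative check is that each proposed summand lies in $[1,n]$, and the bound $n\ge 2m-4$ is precisely what makes the middle case go through.
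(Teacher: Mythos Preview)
Your proof is correct and takes essentially the same approach as the paper: both arguments use the family $\underbrace{1+\cdots+1}_{m-2}+a=m-2+a$ and split according to where a second color first appears, using that color either as a summand (when it is small) or as the target $x_m$ (when it is large). The paper organizes the split around the threshold $i\le n-m+2$ for the least element $i$ of the second color class, whereas you split at $m-1$ and separate out the all-ones solution explicitly; these are cosmetic differences only.
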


\begin{proof}
At least two colors are required in order to have a $2$-colored solution to $E_3$, and hence $RS_{2,3}(n)\ge 2$.  Now consider an exact $2$-coloring of $[1,n]$.  Without loss of generality, assume that $1$ is red and $i\in [2,n]$ is the least positive integer that is colored blue.  We consider two cases, based on the value of $i$.

\underline{Case 1} If $i\le n-m+2$, then the equation $$\underbrace{1+1+\cdots +1}_{m-2\ terms}+i=i+m-2\le n$$ is in $E_m$ and uses at least two colors.

\underline{Case 2} If $i> n-m+2$, then consider the equation $$\underbrace{1+1+\cdots +1}_{m-2\ terms}+(i-(m-2))=i.$$  This equation uses at least two colors and is in $E_m$ whenever $i\ge m-1$, which for this case, occurs when $$n-m+3\ge m-1.$$ This is equivalent to $n\ge 2m-4$, as assumed in the statement of the theorem.

In both cases, we find that there exists an equation in $E_m$ that uses two colors, from which it follows that $RS_{2,m}(n)\le 2$.
\end{proof}

To demonstrate the need for the assumption $n\ge 2m-4$ in the previous theorem, consider the case where $m=6$, $t=2$, and $n=6$.  The only solutions to $E_6$ contained in $[1,n]$  are $$1+1+1+1+1+=5 \quad \mbox{and} \quad 1+1+1+1+2=6.$$  Using the color classes $$C_1=\{1, 2, 5, 6\}, \quad C_2=\{3\}, \quad \mbox{and} \quad C_3=\{4\},$$ we find that $RS_{2,6}(6)\ge 4$.

\begin{theorem}
Let  $m\ge 4$ and $3\le t\le m$. Then for all  $n\ge \frac{t(t-1)}{2}+m-t$, we have $$RS_{t,m}(n)= \ceil{\frac{(t-3)n+\frac{t(t-1)}{2}+m-t}{t-2}}.$$
\end{theorem}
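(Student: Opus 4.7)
The plan is to emulate the two-pronged strategy that produced Theorem~\ref{mOddAlln}: establish the lower bound via an explicit coloring (as in Theorem~\ref{rainbowlower}) and deduce the matching upper bound by a reduction to the rainbow case (Theorem~\ref{mOddAlln}).

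For the lower bound, I would set $k = n + 2 - \lceil \frac{(t-3)n + \frac{t(t-1)}{2} + m - t}{t-2} \rceil$ and consider the coloring that paints $[1,k]$ with one common color (``red'') and gives each element of $[k+1,n]$ its own new color; this is an exact coloring using one fewer color than the claimed value of $RS_{t,m}(n)$. The task is then to show that no $E_m$ solution within $[1,n]$ uses at least $t$ colors under this coloring. Since the red block contributes exactly one color whenever any summand lies in $[1,k]$, a $t$-colored solution requires at least $t-1$ of the $a_i$'s to be distinct values in $[k+1,n]$. The minimum $a_m$ consistent with this constraint arises by taking $m-t+1$ copies of $1$ together with the distinct values $k+1,k+2,\ldots,k+t-2$ as the remaining left-side summands, which gives
\[
a_m \ge (m-t+1) + (t-2)k + \tfrac{(t-2)(t-1)}{2}.
\]
A direct computation shows that the choice of $k$ above forces this lower bound to exceed $n$, ruling out such a solution.

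For the upper bound, I would use the following reduction. Given an exact $c$-coloring $\chi$ of $[1,n]$ with $c = RS_{t,m}(n)$, restrict $\chi$ to $[1, n-(m-t)]$. A direct algebraic manipulation shows that, whenever $t \ge 4$,
\[
c - (m-t) \;\ge\; \frac{(t-3)(n-(m-t)) + \tfrac{t(t-1)}{2}}{t-2} \;=\; RS_{t}\!\left(n-(m-t)\right),
\]
so Theorem~\ref{mOddAlln} applied with parameter $t$ yields a rainbow $E_t$-solution $y_1 + y_2 + \cdots + y_{t-1} = y_t$ inside $[1, n-(m-t)]$. Padding the left side with $m-t$ copies of $1$ produces the $E_m$-equation
\[
\underbrace{1 + 1 + \cdots + 1}_{m-t} + y_1 + y_2 + \cdots + y_{t-1} \;=\; y_t + (m-t),
\]
which lies in $[1,n]$ and uses the colors $\chi(1), \chi(y_1), \ldots, \chi(y_{t-1}), \chi(y_t+m-t)$. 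Whenever $\chi(1)$ differs from each $\chi(y_i)$ the conclusion follows at once.

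The hard part will be the residual case in which $\chi(1)$ already coincides with some $\chi(y_i)$ \emph{and} $\chi(y_t + m-t)$ also lies in $\{\chi(y_1),\ldots,\chi(y_{t-1})\}$, so that only $t-1$ colors appear. I would address this either by choosing an alternate rainbow $E_t$-solution---exploiting the slack that remains when $c - (m-t)$ strictly exceeds $RS_t(n-(m-t))$---or by replacing the $1$'s with $m-t$ copies of an element $p$ satisfying $\chi(p) \notin \{\chi(y_1),\ldots,\chi(y_{t-1})\}$ and $y_t + (m-t)p \le n$; such a $p$ must exist because $\chi$ uses at least $t$ colors in total, but a careful existence argument will be required. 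Finally, the case $t=3$ lies outside the scope of Theorem~\ref{mOddAlln} and must be handled separately; the formula collapses to $RS_{3,m}(n) = m$, and here one can argue directly by fixing representatives of three distinct color classes $1, r_2, r_3$ and inserting padding to form $\underbrace{1+\cdots+1}_{m-3}+r_2+r_3 = r_2+r_3+(m-3)$, which is guaranteed to lie in $[1,n]$ under the stated hypothesis on $n$.
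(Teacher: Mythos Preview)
Your lower bound is correct and is essentially the paper's construction.

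The upper-bound strategy, however, has two genuine gaps.

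For $t\ge 4$: after restricting to $[1,n-(m-t)]$ one checks that
\[
c-(m-t)=\left\lceil\frac{(t-3)\bigl(n-(m-t)\bigr)+\tfrac{t(t-1)}{2}}{t-2}\right\rceil=RS_t\bigl(n-(m-t)\bigr)
\]
\emph{exactly} (the numerators inside the two ceilings coincide), so there is no slack to exploit for your first proposed remedy. For the second remedy you need $p\le (n-y_t)/(m-t)$ with $\chi(p)\notin\{\chi(y_1),\dots,\chi(y_{t-1})\}$; but the rainbow $E_t$-solution may well have $y_t=n-(m-t)$, which forces $p=1$, precisely the case you are trying to escape. Concretely, if $\chi$ colors a long initial block $[1,k]$ monochromatically and every rainbow $E_t$-solution has $y_1\in[1,k]$ and $y_t$ near $n-(m-t)$, there is no admissible $p>k$. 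Neither remedy closes the gap, and it is not clear the reduction can be repaired without an argument of comparable strength to a direct proof.

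For $t=3$: the proposed equation $\underbrace{1+\cdots+1}_{m-3}+r_2+r_3=r_2+r_3+(m-3)$ need not lie in $[1,n]$. Take the exact $m$-coloring with $[1,n-m+1]$ one color and $n-m+2,\dots,n$ each a singleton; then any $r_2,r_3$ with colors distinct from $\chi(1)$ and from each other satisfy $r_2+r_3\ge 2n-2m+5$, so $r_2+r_3+(m-3)\ge 2n-m+2>n$ whenever $n>m-2$.

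The paper sidesteps both issues by inducting on $m+n$ rather than reducing to Theorem~\ref{mOddAlln}. The crucial step (when $n$ is the unique element of its color class and $t\le m-1$) applies the inductive hypothesis in the form $RS_{t,m-1}(n-1)\le k(n-1,m-1,t)$ to obtain a $t$-colored $E_{m-1}$-solution $a_1+\cdots+a_{m-2}=a_{m-1}$ in $[1,n-1]$; then
\[
a_1+\cdots+a_{m-2}+\bigl(n-a_{m-1}\bigr)=n
\]
is an $E_m$-solution using at least $t$ colors, since $\chi(n)$ is a brand-new color and $\{\chi(a_1),\dots,\chi(a_{m-2})\}$ already contains at least $t-1$ colors.
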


\begin{proof}
Let $$k= k(n,m,t)= \ceil{\frac{(t-3)n+\frac{t(t-1)}{2}+m-t}{t-2}}$$ and note that
\begin{equation}
k(t-2)\le (t-3)(n)+\frac{t(t-1)}{2}+m-3.\label{weakineq2}
\end{equation}
Let $\alpha$ be the exact $(k-1)$-coloring of $[1,n]$ having the following color classes:
$$C_1=[1, n+2-k], \ C_2=\{n+3-k\}, \ C_3=\{n+4-k\},\ \dots, \ C_{k-1}=\{n\}.$$
To show that $RS_{t,m}(n)\ge k$, it suffices to show that $\alpha$ does not have a solution to $E_m$ that uses at least $t$ colors. For a contradiction, assume that $$a_1 + a_2 + \cdots a_{m-1} = a_m, \quad  \mbox{where} \ a_1 \leq a_2 \leq \cdots \leq a_{m-1},$$ is such a solution.
Since at least $t$ colors occur among the $a_i$, no color class contains more than $m-t+1$ of the $a_i$. Hence, by Inequality (\ref{weakineq2}),
\begin{align}  a_{1} +  a_{2}  +\cdots &+a_{m-1} \notag \\
& \leq \underbrace{1+1+\cdots +1}_{m-t+1\ terms}+(n+3-k)+(n+4-k)+\cdots +(n+t-k) \notag \\
&=(m-t+1)+3+4+\cdots +t+n(t-2)-k(t-2) \notag \\
&=(m-t-2)+\frac{t(t+1)}{2}+n(t-2)-k(t-2) \notag \\
&\ge (m-t-2)+\frac{t(t+1)}{2}+n(t-2) \notag \\
&\qquad \qquad \qquad -\left( (t-3)n+\frac{t(t-1)}{2}+m-3\right)\notag \\
&=n+1, \notag \end{align}
which contradicts the fact that $a_m \leq n$.

To prove that
\begin{equation}\label{Theorem 7 UpperBd} RS_{t,m} (n) \leq k(n,m,t),
\end{equation}
we use induction on $m+n$, where $m\ge 4$ and $n\ge \frac{t(t-1)}{2}+m-t$. To establish the base cases of the induction, we will show that the Inequality (\ref{Theorem 7 UpperBd}) holds for each of the following two cases: (a) all  $n \geq \frac{t(t-1)}{2}+m-t$, when $m=4$ and $3 \leq t \leq 4$; and (b) $n = \frac{t(t-1)}{2}+m-t$, for all $m\geq 4$ and $3 \leq t \leq m$.

To establish Inequality (\ref{Theorem 7 UpperBd}) in case (a),  note that for $m=4$, we have either $t=3$ or $t=4$. When $m=t=4$, Inequality (\ref{Theorem 7 UpperBd}) holds by Equation (\ref{Fallon4}).
Now assume that $m=4$ and $t=3$.  We will show that  $RS_{3,4}(n)\le k(n,4,3)=4$ by induction on $n\ge 4$.  When $n=4$, an exact $4$-coloring of $[1,4]$ has every number receiving a unique color, and hence  $1+1+2=4$ is a solution to $E_4$ that uses (at least) $3$ colors.  Now assume that $RS_{3,4}(n-1)\le 4$ for some $n-1 \ge 4$ and let $\beta$ be an exact $4$-coloring of $[1,n]$. If $n$ is a surplus integer under $\beta$, then $[1,n-1]$ uses all $4$ colors and, since $RS_{3,4}(n-1)\le 4$, it contains   a solution to $E_4$ that uses at least $3$ colors.

If $n$ is not a surplus integer under $\beta$,  then it receives its own unique color and $[1,n-1]$ uses $3$ colors. In this latter situation,  let $i\in [2,n]$ be the least integer such that $\beta (i)\ne \beta (1)$, and  consider the equation \begin{equation} 1+i+(n-i-1)=n.\label{basecase3,4}\end{equation}  Note that $n-i-1$ is a positive integer, since otherwise $i\ge n-1$, which would imply that $\beta$ uses at least $5$ colors.  It follows that Equation (\ref{basecase3,4}) is a solution to $E_4$ that uses at least $3$ colors.

To show that Inequality (\ref{Theorem 7 UpperBd}) holds in case (b),  we have
\begin{align}
k(n,m,t) &= \ceil{\frac{(t-3)\left(\frac{t(t-1)}{2}+m-t\right)+\frac{t(t-1)}{2}+m-t}{t-2} } \notag \\
&=\ceil{\frac{(t-2)\left(\frac{t(t-1)}{2}+m-t\right)}{t-2}} \notag \\ &=\frac{t(t-1)}{2}+m-t = n, \notag
\end{align}
and hence, in any exact $\left(\frac{t(t-1)}{2}+m-t\right)$-coloring of $[1,n]$, each element is the only member of its color class.  So, in this case, every solution to $E_m$ is necessarily rainbow, establishing  Inequality (\ref{Theorem 7 UpperBd})  in this case.

Having taken care of the base cases in the inductive proof of Inequality (\ref{Theorem 7 UpperBd}),
we now let $m \geq 5$,  $t \geq 3$, and $n\ge \frac{t(t-1)}{2}+m-t+1$.   Assume that for all $m' \geq 4$ and $n' \geq \frac{t(t-1)}{2}+m-t$ with $m'+n' <m+n$, we have
\begin{equation}\label{IndHyp}
 RS_{t,m'}(n') \leq k(n',m',t)
 \end{equation}
 for all $3\le t\le m'$.  To complete the proof, it suffices to prove that for every $t$ such that $3 \leq t \leq m$, every exact $k(n,m,t)$-coloring of $[1,n]$ contains a solution to $E_m$ using at least $t$ colors. Let $\chi$ be an exact $k(n,m,t)$-coloring of $[1,n]$.
By the Division Algorithm, let
 \begin{equation}\label{DA}
 (t-3)n+\frac{t(t-1)}{2}+m-t=(t-2)\ell +j,
 \end{equation}
  where $\ell, j\in \mathbb{Z}$ and $0\le j\le t-3$.  The remainder of the proof is separated into two cases.

\underline{Case 1} Assume that $1\le j\le t-3$.  By Equation (\ref{DA}),
$$ k(n,m,t) = \ceil{\frac{(t-3)n+\frac{t(t-1)}{2}+m-t}{t-2}}=\ceil{\ell +\frac{j}{t-2}}=\ell +1.$$
Therefore, the interval $[1,n-1]$ uses at least $\ell$ colors under $\chi$.
Now, by Equation (\ref{IndHyp}), \begin{align} RS_{t,m}(n-1)&\le k(n-1,m,t)\notag \\ &=\ceil{\frac{(t-3)n+\frac{t(t-1)}{2}+m-t}{t-2}-\frac{t-3}{t-2}}\notag \\&=\ceil{\ell-\frac{(t-3)-j}{t-2}}=\ell.
\notag \end{align}   Therefore, in $[1,n-1]$ (and hence in $[1,n]$) there is a solution to $E_m$ that uses at least $t$ colors.

\underline{Case 2} Assume that $j=0$.  Then $k(n,m,t) = \ell$  and
$$k(n-1,m,t) = \ceil{\frac{(t-3)(n-1)+\frac{t(t-1)}{2}+m-t}{t-2}}= \ceil{\ell -\frac{t-3}{t-2}}=\ell.$$  If  $[1,n-1]$ uses all $\ell$ colors of $\chi$, then by Equation (\ref{IndHyp}) there exists a solution to $E_m$ that uses at least $t$ colors.  Otherwise, $[1,n-1]$ uses only $\ell-1$ colors and the integer $n$ is the only member of its color class.  We may assume that $t\le m-1$ since the $t=m$ case corresponds with Theorem \ref{mOddAlln}.  For $3\le t\le m-1$, by Equation (\ref{IndHyp}) we obtain
\begin{align}
RS_{t,m-1}(n-1)&\le k(n-1,m-1,t) \notag \\
&=\ceil{\frac{(t-3)(n-1)+\frac{t(t-1)}{2}+(m-t-1)}{t-2}} \notag \\
&=\ceil{\ell -\frac{t-3}{t-2}-\frac{1}{t-2}}\notag \\&=\ell -1.\notag
\end{align}
It follows that there exists a solution $a_1+a_2+\cdots + a_{m-2}=a_{m-1}$ to $E_{m-1}$ that uses at least $t$ colors, where $a_{m-1}\le n-1$.  Then $$a_1+a_2+\cdots + a_{m-2}+(n-(a_1+a_2+\cdots +a_{m-2}))=n$$ is a solution to $E_m$ that uses at least $t$ colors.

In both cases, $\chi$ contains a solution to $E_m$ that uses at least $t$ colors.  It follows that $RS_{t,m}(n)\le k(n,m,t)$, completing the proof.
\end{proof}

\bibliographystyle{amsplain}

\begin{thebibliography}{10}

\bibitem{AEHNWY} K. Ansaldi, H. El Turkey, J. Hamm, A. Nu'Man,  N. Warnberg, and M. Young, Rainbow numbers of  $\mathbb{Z}_n$  for  $a_{1}x_{1}+a_{2}x_{2}+a_{3}x_{3}=b$,  {\em Integers} {\bf 20} (2020), \#A51, 15 pp.



\bibitem{BKKTTY} E. Bevilacqua, S. King, J. Kritschgau, M. Tait, S. Tebon, and M. Young, Rainbow numbers for  $x_1 + x_ 2 = kx_{3}$  in  $\mathbb{Z}_n$, {\em Integers} {\bf 20} (2020),  \#A50, 27 pp.






\bibitem{B} M. Budden, Schur numbers involving rainbow colorings,  {\em Ars Math. Contemporanea} {\bf 18} (2020), 281-288.










\bibitem{Con} D. Conlon, Rainbow solutions of linear equations over $\mathbb{Z}_p$, {\em Discrete Math.} {\bf 306} (2006), 2056-2063.

\bibitem{DLMOR} J.A. De Loera, R.N. La Haye, A. Montejano, D. Oliveros, and E. Roldán-Pensado, A rainbow Ramsey analogue of Rado's theorem,
{\em Discrete Math.} {\bf 339}(11) (2016) 2812-2818.

\bibitem{ESS} P. Erd\H{o}s, M. Simonovits, and V. S\'os, Anti-Ramsey theorems, Colloq. Math. Soc. J\'anos Bolyai {\bf 10}, (1975), 633-643, in {\it Infinite and Finite Sets, (Colloq., Keszthely, 1973; dedicated to P. Erd\H{o}s on his 60th birthday), Vol. II,} North-Holland, Amsterdam-London, 1975.

\bibitem{FGRWW} K. Fallon, C. Giles, H. Rehm, S. Wagner, and N. Warnberg,  Rainbow numbers of $[n]$ for $\sum_{i=1}^{k-1} x_i=x_k$, {\em Australas. J. Combin.} {\bf 77}(1) (2020), 1-8.

\bibitem{FMMRWZ} K. Fallon, E. Manhart, J. Miller, H. Rehm, N. Warnberg, and L. Zinnel,  Rainbow numbers of $[m]\times [n]$ for $x_1+x_2=x_3$, {\em Integers} {\bf 23} (2023), \#A47, 30pp.



\bibitem{FJR} J. Fox, V. Jungi\'c, and R. Radoi\v{c}i\'c,  Sub-Ramsey numbers for arithmetic progressions and Schur triples, Integers {\bf 7}(2) (2007), \#A12, 13pp.

\bibitem{FMR} J. Fox, M. Mahdian, and R. Radoi\v{c}i\'c, Rainbow solutions to the Sidon equation, {\it Discrete Math.} {\bf 308} (2008), 4773-4778.





\bibitem{H} M. Huicochea, On the number of rainbow solutions of linear equations in $\mathbb{Z} / p\mathbb{Z}_p$, {\em Australas. J. Combin.} {\bf 78} (2020), 118-132.

\bibitem{HM} M. Huicochea and A. Montejano,
The structure of rainbow-free colorings for linear equations on three variables in $\mathbb{Z}_p$,
{\em Integers} {\bf 15A} (2015), \#A12, 19 pp.


\bibitem{JNR} V. Jungi\'{c}, J. Nes\v{e}tr\v{i}l, and R. Radoi\v{c}i\'{c}, Rainbow Ramsey theory,
{\em Integers} {\bf 5}(2) (2005),  \#A9, 13 pp.


\bibitem{LM} B. Llano and A. Montejano,  Rainbow-free colorings for  $x+y=cz$  in  $\mathbb{Z}_p$, {\em Discrete Math.} {\bf 312}(17) (2012), 2566-2573.

\bibitem{R} R. Rado,  Studien zur Kombinatorik, {\it Math. Z.} {\bf 36} (1933), 424-480.

\bibitem{Schur} I. Schur,  \"Uber die Kongruenz $x^m+y^m\equiv z^m\pmod{p}$, {\em Jahresbericht der Deutschen Mathematiker-Vereinigung} {\bf 25} (1916), 114-117.



\end{thebibliography}

\end{document}